\documentclass[11pt,a4paper]{article}
\usepackage[left=3cm,right=3cm,top=3cm,bottom=3cm]{geometry}
\linespread{1.2}
\usepackage{xcolor}
\usepackage[noend]{algpseudocode}
\usepackage[ruled]{algorithm2e}
\usepackage{comment}
\usepackage{cite}
\usepackage{amssymb}
\usepackage{amsmath}
\usepackage{amsthm}
\usepackage{amsfonts}
\usepackage{graphicx}
\usepackage{enumerate}
\usepackage[ansinew]{inputenc}
\usepackage{hyperref}
\usepackage{multirow}
\usepackage{color}
\usepackage{colortbl}   

\usepackage{wrapfig}
\usepackage{floatflt,epsfig} 
\usepackage{fancyhdr}
\usepackage{array}
\usepackage[right]{eurosym}

\newtheorem{theorem}{Theorem}
\newtheorem{def.}{Definition}
\newtheorem{prop.}[theorem]{Proposition}
\newtheorem{lem.}[theorem]{Lemma}
\newtheorem{cor.}[theorem]{Corollary}
\newtheorem{conj.}[theorem]{Conjecture}
\newtheorem{example}{Example}

\newtheorem{rem.}[theorem]{Remark}

\newcommand{\norm}[2]{
\left\| #2 \right\|_{#1}
}






\def\be{\begin{equation}}
\def\ee{\end{equation}}
\def\ben{\begin{eqnarray}}
\def\een{\end{eqnarray}}











\newcommand{\range}[1]{\mathsf{ran}\left( #1 \right)} 
\newcommand{\mydiag}[1]{\mathsf{diag}\left( #1 \right)} 
\newcommand{\mydist}[1]{\mathsf{dist}\left( #1 \right)} 
\newcommand{\support}[1]{\mathsf{supp}\left( #1 \right)} 
\newcommand{\identity}[1]{\mathsf{id}_{ #1 }}
\newcommand{\kernel}[1]{\mathsf{ker}\left( #1 \right)}

\newcommand{\finspan}[1]{\mathsf{span}\left\{ #1 \right\}}


\unitlength1mm

\definecolor{darkviolet}{rgb}{0.58,0,0.83} 

\def\letters{a,b,c,d,e,f,g,h,i,j,k,l,m,n,o,p,q,r,s,t,u,v,w,x,y,z}
\def\Letters{A,B,C,D,E,F,G,H,I,J,K,L,M,N,O,P,Q,R,S,T,U,V,W,X,Y,Z}
\makeatletter
\@for \@l:=\Letters \do{%
  \expandafter\edef\csname\@l bb\endcsname{%
  \noexpand\ensuremath{\noexpand\mathbb{\@l}}}%
  \expandafter\edef\csname\@l bf\endcsname{{\noexpand\bf \@l}}%
  \expandafter\edef\csname\@l cal\endcsname{%
  \noexpand\ensuremath{\noexpand\mathcal{\@l}}}%
  \expandafter\edef\csname\@l eu\endcsname{%
  \noexpand\ensuremath{\noexpand\EuScript{\@l}}}%
  \expandafter\edef\csname\@l frak\endcsname{%
  \noexpand\ensuremath{\noexpand\mathfrak{\@l}}}%
  \expandafter\edef\csname\@l rm\endcsname{{\noexpand\rm \@l}}%
  \expandafter\edef\csname\@l scr\endcsname{%
  \noexpand\ensuremath{\noexpand\mathscr{\@l}}}%
}
\@for \@l:=\letters \do{%
  \expandafter\edef\csname\@l bf\endcsname{{\noexpand\bf \@l}}%
  \expandafter\edef\csname\@l frak\endcsname{%
  \noexpand\ensuremath{\noexpand\mathfrak{\@l}}}%
  \expandafter\edef\csname\@l scr\endcsname{%
  \noexpand\ensuremath{\noexpand\mathscr{\@l}}}%
}
\makeatother
\newcommand{\isdef}{\mathrel{\mathrel{\mathop:}=}}
\newcommand{\defis}{\mathrel{=\mathrel{\mathop:}}}
\newcommand{\bs}{\boldsymbol}

\renewcommand{\d}{\operatorname{d}\!}
\DeclareMathOperator{\diam}{diam}

\newtheorem{remark}[theorem]{Remark}
\newcommand{\vertiii}[1]{{\left\vert\kern-0.25ex\left\vert\kern-0.25ex\left\vert #1 
    \right\vert\kern-0.25ex\right\vert\kern-0.25ex\right\vert}}
 
\title{Construction of generalized samplets in Banach spaces}
\author{Peter Balazs and Michael Multerer}
\date{\today}

\begin{document}

\maketitle

\begin{abstract}
Recently, samplets have been introduced as localized discrete signed
measures which are tailored to an underlying data set. Samplets exhibit
vanishing moments, i.e., their measure integrals vanish for all polynomials up 
to a certain degree, which allows for feature detection and data compression.
In the present article, we extend the different construction steps of samplets
to functionals in Banach spaces more general than point 
evaluations. To obtain stable representations, we assume that these functionals 
form frames with square-summable coefficients or even Riesz bases with 
square-summable coefficients. In either case, the corresponding analysis 
operator is injective and we obtain samplet bases with the desired properties by
means of constructing an isometry of the analysis operator's image. Making the
assumption that the dual of the Banach space under consideration is imbedded
into the space of compactly supported distributions, the multilevel
hierarchy for the generalized samplet construction is obtained by spectral 
clustering of a similarity graph for the functionals' supports. Based on this
multilevel hierarchy, generalized samplets exhibit vanishing moments 
with respect to a given set of primitives within the Banach space.
We derive an abstract localization result for the generalized samplet 
coefficients with respect to the samplets' support sizes and the approximability
of the Banach space elements by the chosen primitives. Finally, we present three
examples showcasing the generalized samplet framework. 
\end{abstract}

\section{Introduction}
Unstructured data is ubiquitous, while the amount of data is immense and
rapidly increasing. The processing of social network data, text data,
audio files, photos and videos, but also scientific data, like measurements
and simulation data has become vital to our modern society. Multiresolution
methods in general and wavelets in particular are well-established tools
for nonlinear approximation, image analysis, signal processing, 
and machine learning, see, e.g., \cite{HM_Chui,DA,HM_Daubechies,HM_Mallat}.
Even so, wavelets are typically restricted to structured data, such as uniform
subdivisions of the real line or bounded intervals, 
see \cite{HM_Alp93,HM_Quak,HM_DKU}, with higher dimensional constructions
relying on tensorization, see \cite{HM_STE,HM_HS,HM_PSS97}. Wavelet-like
multiresolution analyses for unstructured data have been suggested in 
\cite{HM_CM06,HM_RE11,GNC10,HM22,TW03} and rely on point evaluations or
local averages. In particular, the construction of Tausch-White 
multi-wavelet bases in \cite{TW03} has recently been transferred 
to discrete signed measures in \cite{HM22} resulting to the concept of 
\emph{samplets}. Samplets are tailored to the underlying data set and 
can be constructed such that they exhibit \emph{vanishing moments}, i.e.,
their measure integrals vanish for all polynomials up to a certain degree.

This article focuses on extending the construction methodology of samplets to
accommodate a broader class of functionals in Banach spaces beyond point
evaluations. To ensure stable representations, we assume these functionals form
either frames or Riesz bases with square-summable coefficients. 
In both cases, the associated analysis operator is injective, enabling the
construction of samplet bases with the desired properties by designing an 
isometry on the image of the analysis operator.

We remark that the foundational definition of frames, introduced in the 
pioneering work \cite{duffschaef1}, involves a sequence of elements within a 
Hilbert space. Calculating the inner product of an element there with this 
sequence produces the analysis operator that is considered to be both bounded
and boundedly invertible as a map from that Hilbert into the space of 
square-summable sequences. Frames offer significantly more flexibility than 
(orthonormal) bases. In situations, where orthonormal bases are too constrictive,
but one desires to keep some independence characteristics, Riesz bases serve as
a viable alternative, ensuring that the analysis operator remains a bijection.
The frame concept has inspired considerable theoretical exploration, as outlined
in \cite{ole1}, and has been applied extensively in signal processing 
\cite{HM_Mallat}, acoustics \cite{framepsycho16}, as well as many other areas. 
Frames can also be generalized, e.g., to Banach spaces \cite{cashanlar99}. 
This methodology has been utilized to represent not only elements in a
Hilbert space but also operators, using the Galerkin method, see, e.g., 
\cite{sauter2010boundary}. Within the Galerkin method, bases
have traditionally been used \cite{DahSch99a}, but more recently frames have
been applied \cite{stevenson03}. Riesz bases are also used here to preserve some
independence attributes \cite{DA}. Furthermore, frames and Riesz bases have been
studied from a frame-theoretic viewpoint with respect to operator 
representations \cite{xxlframoper1}, in particular also in the localized frame 
setting \cite{xxlgro14}.
Noting that the frame condition in the initial definition of \cite{stevenson03} 
differentiated between a Hilbert space and its dual, \cite{xxlhar18} developed
the theory of Banach frames with square-summable coefficients, demonstrating
that when the specific values of the frame bounds are significant, such an 
approach is warranted. 

In \cite{HM22}, the multilevel hierarchy for the construction of 
samplets has been based on a Euclidean hierarchical clustering of the 
underlying Dirac-\(\delta\)-distributions' supports. For more general
functionals, this approach is not possible anymore. Making the assumption that
the dual space of the Banach space under consideration is imbedded into the
compactly supported distributions and introducing a similarity measure based on
the distributions' supports, we propose to construct the multilevel hierarchy by
means of spectral clustering, see \cite{vLux07,vLBB08}. Afterwards,
generalized samplets are constructed by computing filter coefficients from the
QR decomposition of a cluster's moment matrix with respect to a given set of
primitives, similarly to \cite{AHK14,HM22}. Within this setting, we derive an
abstract localization, i.e., decay result, for the generalized samplet
coefficients with respect to the samplets' support sizes and the approximability
of the Banach space elements by the chosen primitives.

The remainder of this article is structured as follows. In 
Section~\ref{sec:Preliminaries}, we recall general Banach frame and 
Banach Riesz basis related results, which are relevant for the generalized 
samplet construction. In particular, we discuss transformations of frames and 
bases by means of isometries acting on the analysis operator's image.
In Section~\ref{sec:GenFinSamp}, we apply these results in 
case of finite sets functionals, which are the starting point for constructing
generalized samplets. Section~\ref{sct:construction} constitutes the main
contribution of this article. Here, we construct samplets exhibiting vanishing
moments with respect to a given set of primitives by constructing a suitable
isometry on the finite analysis operator's image. In particular, we prove the
abstract localization result for the samplet coefficients in general Banach
spaces. As a corollary, we obtain the well known corresponding result for
wavelets with vanishing moments in Sobolev spaces. Finally, 
in Section~\ref{sec:exampfin0}, we present three examples showcasing the
generalized samplet framework. These examples consider samplets in
reproducing kernel Hilbert spaces, the Tausch-White wavelet construction for
piecewise polynomial ansatz functions and operator adapted wavelets.

\section{Preliminaries}\label{sec:Preliminaries}
\subsection{Banach frames with square-summable coefficients}
We start from the concept of \emph{frames} in Banach spaces, see, e.g.,
\cite{xxlhar18,chst03}.
Throughout this article, let $(\Bcal,\|\cdot\|_\Bcal)$ be a Banach space
having an \emph{\(\ell^2\)-frame}
\({\bs f}\subset\Bcal'\), where \(\Bcal'\) is the
topological dual space of \(\Bcal\).
We denote the duality pairing 
by $(f,v)_{\Bcal'\times\Bcal}$ for any $v\in\Bcal,f\in\Bcal'$. 

\begin{def.}\label{def:l2frame}
 A sequence \( {\bs f} \isdef \{f_i\}_{i\in I}\subset\Bcal'\), \(I\subset\Nbb\), 
 is an 
\emph{$\ell^2$-frame} for $\Bcal$,
if there exist \(A,B>0\) such that
\begin{equation}\label{eq:frame2}
A\|v\|_{\Bcal}^2\leq\big\|
\big\{(f_i,v)_{\Bcal'\times\Bcal}\big\}_{i \in I}\big\|_{\ell^2}^2
\leq B\|v\|_{\Bcal}^2\quad\text{for all }v\in\Bcal.
\end{equation}
We call the sequence \(\bs f\subset\Bcal'\) a 
\emph{Bessel sequence} if the upper inequality is satisfied.
\end{def.}

We remark that it has been shown in \cite[Lemma 2.1]{chst03} that the existence
of a frame
for \(\Bcal\) with coefficients in \(\ell^p(I)\), \(1<p<\infty\), already yields
the reflexivity and the separability of \(\Bcal\).
Definition~\ref{def:l2frame} particularly considers Banach frames with
coefficient sequences in $\ell^2(I)$. Such frames lead to a 
Hilbert space topology, see  \cite{stoev09}, with equivalent but not
identical norms. This difference is considered essential for typical settings in
numerical analysis \cite{xxlhar18}. This is also the (obvious) dual construction
to what was termed \emph{Stevenson frames} in \cite{xxlhar18}, where the frame
is considered to be in $\Bcal$, and the elements $v$ in $\Bcal'$. To make the 
notation shorter we call such a sequence just a \emph{frame}. 

We introduce the \emph{analysis operator}
\begin{equation}\label{eq:analysisOp}
T^\star\colon\Bcal\to\ell^2(I),\quad T^\star v\isdef
\big\{(f_i,v)_{\Bcal'\times\Bcal}\big\}_{i\in I}
\end{equation}
and the \emph{synthesis operator}
\begin{equation}\label{eq:synthesisOp}
T\colon\ell^2(I)\to\Bcal',\quad T{\bs\alpha}\isdef
\sum_{i\in I}\alpha_i f_i.
\end{equation}
Note that \(T\) and \(T^\star\) are indeed adjoint to each other, 
cp.\ \cite{xxlhar18}. 

For Banach frames with arbitrary coefficients spaces, e.g., other 
$\ell^p$-spaces, the combination of analysis and synthesis operators is not
straightforward, see \cite{Stoeva08} for a detailed discussion.
In the present setting, however, there
holds $\ell^2(I) \simeq[{\ell^2}(I)]'$ and we may introduce the
\emph{frame operator}
\begin{equation}\label{eq:frameOp}
S\isdef TT^\star\colon\Bcal\to\Bcal',\quad Sv
=\sum_{i\in I}(f_i,v)_{\Bcal'\times\Bcal}f_i.
\end{equation}
The frame operator is symmetric, i.e.,
\begin{equation}\label{eq:FrameOpSym}
(Su,v)_{\Bcal'\times\Bcal} =(Sv,u)_{\Bcal'\times\Bcal}
\quad\text{for all }u,v\in\Bcal.
\end{equation}
As noted before, the existence of a frame for \(\Bcal\) already
implies that $\Bcal$ has to be reflexive. In particular, 
this means that $S$ is self-adjoint.

The frame operator $S$ is also 
uniformly elliptic and continuous, i.e.,
\begin{equation}\label{eq:FrameOpPos}
A\|u\|^2_{\Bcal}\leq(Su,u)_{\Bcal'\times\Bcal}\leq B\|u\|^2_{\Bcal}.
\end{equation}
By the closed range theorem \cite{gohbgol1}, the frame operator $S$ therefore
exhibits a bounded inverse $S^{-1}\colon\Bcal' \rightarrow \Bcal$ satisfying
\begin{equation}\label{eq:invFrameOpPos}
\frac 1 B\|u'\|^2_{\Bcal'}\leq(u',S^{-1}u')_{\Bcal'\times\Bcal}
\leq \frac 1 A\|u'\|^2_{\Bcal'}.
\end{equation}
Herein, the dual norm on $\Bcal'$ is defined 
as usual by 
\[
\|f\|_{\Bcal'}=\sup_{0\neq v\in\Bcal}
\frac{|(f,v)_{\Bcal'\times\Bcal}|}{\|v\|_{\Bcal}}.
\]

We define the \emph{canonical dual frame} by 
\begin{equation}\label{eq:canonicalDual}
\tilde{f}_i \isdef  S^{-1} f_i,\quad i\in I,
\end{equation}
which is a sequence of elements in $\Bcal$. 
Analogously to the arguments in \cite{xxlhar18}, we see that the 
canonical dual frame  $\{ \tilde{f}_i\}_{i\in I}$,  has the frame bounds 
$1/B$ and $1/A$, and allows reconstruction. 
Denoting the corresponding analysis operator by $\widetilde T^\star$ and 
the frame operator by $\widetilde S$, we have that $\widetilde T^\star = T^\star 
S^{-1}$
and, consequently, $\widetilde S = S^{-1}$. 

There holds the following continuous analogous property of the 
un\-der-de\-ter\-mined 
least squares problem as a straightforward generalization of the Hilbert space
frame case, i.e. \cite[Lemma VIII]{duffschaef1}, 
adding details to \cite[Theorem 4.3]{xxlhar18}.

\begin{lem.}\label{lem:2} Let \(\{f_i\}_{i\in I}\subset\Bcal'\) be a frame. 
  Then, the minimum norm 
pseudo-inverse $T^\dagger \colon \Bcal'\to\ell^2(I) $ is given by 
$T^\dag=T^\star S^{-1}= \widetilde{T}^\star$
with \(\|T^\dagger u'\|_{\ell^2}\leq A^{-1}\|u'\|_{\Bcal'}\) for any 
\(u'\in\Bcal'\). Particularly, there holds for every \(u'\in\Bcal'\) that 
\begin{equation}\label{eq:minNorm}
\|{\bs\beta}\|_{\ell^2}^2=\|T^\dagger u'\|_{\ell^2}^2
+\|{\bs\beta}-T^\dagger u'\|_{\ell^2}^2
\end{equation}
for any sequence \({\bs\beta}\in\ell^2(I)\) with \(u'=T{\bs\beta}\).
\end{lem.}

\begin{proof}
We have that
\[
T T^\dagger T = T T^\star S^{-1} T = T\] 
and 
\[
T^\dagger T T^\dagger
= T^\star S^{-1} T T^\star S^{-1} = T^\star S^{-1} = T^\dagger.
\] 
The claim on the bound of \(T^\dagger\) directly follows from the properties
of the canonical dual frame. 

The second claim is obtained according to
\[
\|{\bs\beta}\|_{\ell^2}^2 = 
\|T^\dagger u'+{\bs\beta}-T^\dagger u'\|_{\ell^2}^2
=\|T^\dagger u'\|_{\ell^2}^2
+2(T^\dagger u',{\bs\beta}-T^\dagger u')_{\ell^2}
+\|{\bs\beta}-T^\dagger u'\|_{\ell^2}^2
\]
by noticing that
\begin{align*}
(T^\dagger u',{\bs\beta}-T^\dagger u')_{\ell^2}&=
(T^\dagger u',{\bs\beta})_{\ell^2}-(T^\dagger u',T^\dagger u')_{\ell^2}
\\
&=(T{\bs\beta},S^{-1}u')_{\Bcal'\times\Bcal}
-(TT^\dagger u',S^{-1}u')_{\Bcal'\times\Bcal}\\
&=(u',S^{-1}u')_{\Bcal'\times\Bcal}-
(u',S^{-1}u')_{\Bcal'\times\Bcal}=0.
\end{align*}
\end{proof}

Equation~\eqref{eq:minNorm} means that the pseudo-inverse $T^\dagger$ fulfills a 
minimum norm property, as in the Hilbert space setting, cp.\ \cite{ole1},
see also \cite{meles77}.

In frame theory it is well known, see, e.g., \cite{stoev09}, that Banach spaces
with $\ell^2$-frames are equivalent to Hilbert spaces. Indeed, the properties 
\eqref{eq:FrameOpSym} and \eqref{eq:FrameOpPos} imply that \(\Bcal\) becomes a 
Hilbert space when endowed with the inner product
\begin{equation}\label{eq:InnerPB}
\langle u,v\rangle_{\Bcal}\isdef(Su,v)_{\Bcal'\times\Bcal}
=(T^\star u,T^\star v)_{\ell^2}.
\end{equation}
Similarly, we may introduce an inner product on \(\Bcal'\)
according to
\begin{equation}\label{eq:InnerPBprime}
\langle u',v'\rangle_{\Bcal'}\isdef(u',S^{-1}v')_{\Bcal'\times\Bcal}
=(T^\dagger u',T^\dagger v')_{\ell^2}.
\end{equation}
In particular, Equations \eqref{eq:InnerPB} and \eqref{eq:InnerPBprime}
imply that the Hilbert space structure on \(\Bcal\) is inherited from
\(\ell^2(I)\) using the analysis operator, while the one for \(\Bcal'\)
is inherited by the analysis operator of the canonical dual frame.
We denote the corresponding norms by
\[
  \vertiii{u}_{\Bcal}\isdef\sqrt{\langle u,u\rangle_\Bcal}
  \quad\text{and}\quad
  \vertiii{u'}_{\Bcal'}\isdef\sqrt{\langle u',u'\rangle_{\Bcal'}},
\]
respectively.
In particular, we have that any frame is a Parseval 
frame with respect to the Hilbert space topology, 
since by \eqref{eq:InnerPB} we have that 
\be \label{eq:starnew}
  \vertiii{u}_{\Bcal}^2 = \|T^\star u\|_{\ell^2}^2. 
\ee
The construction amounts to a generalization
of the canonical tight frame, which is done in Hilbert spaces by applying 
the square-root of the inverse frame operator on the primal frame, i.e., 
considering \(f_i^{t} \isdef S^{-1/2} f_i\), \(i\in I\). 
This definition, however, requires the uniformly elliptic, symmetric 
frame operator to
map a space into itself and therefore allowing the concatenation of 
$S^{-1/2}$ with itself. 
In the Banach space setting, we can consider ``the closest possible thing'', 
which is changing the topology in the aforementioned way. Clearly, we have 
$\langle S^{-1/2} f_i, S^{-1/2} f_j \rangle
= \langle f_i, S^{-1} f_j \rangle$ in the Hilbert space context.
As we have shown, using the inner products defined in 
\eqref{eq:InnerPBprime}, 
a frame \(\{f_i\}_i\subset\Bcal'\) 
becomes a Parseval frame for $\Bcal'$, while its canonical dual becomes 
a Parseval frame with respect to \eqref{eq:InnerPB}. 
Hence, let us note again, that we use a dual approach to finding the ``canonical
Parseval frames'', 
cp.\ \cite{ole1}, by not changing the frame elements, but rather adapting the
topology. 
Thereby, $\Bcal$ is isomorphic to a Hilbert space with the equivalent norm 
\(\vertiii{\cdot}_{\Bcal}\). 
There obviously holds
\[
\sqrt{A}\|u\|_\Bcal\leq\vertiii{u}_{\Bcal}\leq\sqrt{B}\|u\|_\Bcal
\quad\text{for all }u\in\Bcal.
\]
This stresses the same argument from a different viewpoint, 
when the frame bounds matter one should distinguish this definition of frames, 
and bases, from 
the Hilbert space concept. As a particular case for this argument, let us note 
that in the Hilbert space context, not all
frames are automatically considered to be Parseval frames.

Finally, let us mention a clear difference to Hilbert space frames. For the 
latter, the combination of the analysis and the synthesis operator in reversed 
order results in the (bi-infinite) Gram matrix. In the present setting, 
this is not possible, as $T^\star$ and $T$ cannot be combined in this order. 
Even so, the operator 
\[
{\bs G}\colon\ell^2(I)\to\ell^2(I),\quad 
{\bs G}\isdef T^\star \widetilde{T}=T^\star S^{-1}T=T^\dag T
\]
is a well defined, bounded
endomorphism of \(\ell^2(I)\). The canonical representation 
is by the (potentially bi-infinite) matrix 
\[
{\bs G}\isdef\big[\big(
f_i , \tilde f_j\big)_{\Bcal'\times\Bcal}\big]_{i,j\in I}.
\]
Particularly, there holds that \(\range{\bs G}
=\range{T^\star}\) and
\({\bs G}\colon\ell^2(I)\to\range{T^\star}\)
is an orthogonal projection, which follows from 
\[
  ({\bs G}{\bs\alpha},{\bs\alpha})_{\ell^2}
  =\big(T^\dag T{\bs\alpha},{\bs\alpha}\big)_{\ell^2}
  =\big(T{\bs\alpha},S^{-1}T{\bs\alpha}\big)_{\Bcal'\times\Bcal}
  =  \big(T^\dag T{\bs\alpha},T^\dag T{\bs\alpha}\big)_{\ell^2}
=  ({\bs G}{\bs\alpha},{\bs G}{\bs\alpha})_{\ell^2}
  ,
\]
according to \eqref{eq:InnerPBprime}.

\subsection{Banach Riesz bases with square-summable coefficients}
Introducing a notion of independence, we can use this to give a more precise
characterization of the Hilbert space topology introduced in the previous 
paragraph. This will be detailed in the following.

\begin{def.}
  A sequence \({\bs f}\isdef\{f_i\}_{i\in I}\subset\Bcal'\) is a 
  \emph{Riesz sequence} in $\Bcal'$, if there exist \(A,B>0\) such
  that for all finite coefficient sequences 
  \({\bs\alpha}=\{\alpha_i\}_{i\in I}\) there
holds
\begin{equation}\label{eq:frame}
A\|{\bs\alpha}\|_{\ell^2}^2\leq
\bigg\|\sum_{i\in I} \alpha_if_i\bigg\|_{\Bcal'}^2
\leq B\|{\bs\alpha}\|_{\ell^2}^2.
\end{equation}
It is called a \emph{Riesz basis} if it is a Riesz sequence 
and \(\Bcal'=\overline{\finspan{f_i}_{i\in I}}\) .
\end{def.}

\begin{remark} The existence of an \(\ell^2\)-frame for \(\Bcal\), see
Definition~\ref{def:l2frame}, is equivalent to the existence of a 
Riesz basis for \(\Bcal'\). This is due to the fact that a Banach space
having an \(\ell^2\)-frame renders it to be isomorphic to a separable Hilbert
space, which exhibits an orthonormal basis. The latter is particularly a Riesz 
basis. Vice versa, each Riesz basis constitutes a frame, see also below. 
\end{remark}

The following result from \cite{chst03} characterizes
the interplay between a frame for \(\Bcal\) and a
Riesz basis for \(\Bcal'\).

\begin{theorem}\label{thm:charRiesz} Let \({\bs f}\subset\Bcal'\) 
  be a frame.
The following statements are equivalent:
\begin{itemize}
  \item[(i)] The sequence \({\bs f}\subset\Bcal'\) is a Riesz basis. 
\item[(ii)] The synthesis operator is injective, i.e.,
  if \(T{\bs\alpha}=0\), then \({\bs\alpha}={\bs 0}\).
\item[(iii)] The analysis operator is surjective, i.e.,
  $\range{T^\star}=\ell^2(I)$. 
\item[(iv)] There exists a
  biorthogonal
  sequence \(\widetilde{\bs f}\isdef\{\tilde{f}_i\}_{i\in I}\subset\Bcal\), 
  i.e., \((f_i,\tilde{f}_j)_{\Bcal'\times\Bcal}=\delta_{i,j}\). 
The biorthogonal sequence is 
unique and forms a
Riesz basis in \(\Bcal\). 
\end{itemize}
\end{theorem}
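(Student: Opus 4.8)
The plan is to close the cycle $(i)\Rightarrow(ii)\Leftrightarrow(iii)\Rightarrow(i)$ among the first three statements and then to attach $(iv)$ by proving $(iii)\Rightarrow(iv)$ and $(iv)\Rightarrow(ii)$. The conceptual core is $(ii)\Leftrightarrow(iii)$, and it rests entirely on the projection property of $\bs G$ recorded above. Recall that $\bs G=T^\dagger T$ is the orthogonal projection onto the \emph{closed} subspace $\range{T^\star}$ of $\ell^2(I)$. First I would show $\kernel{T}=\kernel{\bs G}$: the inclusion $\kernel{T}\subseteq\kernel{\bs G}$ is immediate, while $\bs G\bs\alpha=0$ forces $T\bs\alpha=TT^\dagger T\bs\alpha=T\bs G\bs\alpha=0$ by the identity $TT^\dagger T=T$ from the proof of Lemma~\ref{lem:2}. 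Since $\bs G$ projects orthogonally onto $\range{T^\star}$, its kernel is $\range{T^\star}^{\perp}$, whence $\kernel{T}=\range{T^\star}^{\perp}$. Therefore $T$ is injective if and only if $\range{T^\star}$ is dense, and because $\range{T^\star}$ is closed this is equivalent to $\range{T^\star}=\ell^2(I)$, i.e.\ to surjectivity of $T^\star$. This gives $(ii)\Leftrightarrow(iii)$.

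Next I would dispatch the two implications connecting $(i)$. The implication $(i)\Rightarrow(ii)$ is trivial, since the lower Riesz bound in \eqref{eq:frame} forces $T\bs\alpha=\bs0$ whenever $T\bs\alpha=0$. For $(iii)\Rightarrow(i)$ I would argue by duality: the frame inequalities \eqref{eq:frame2} make $T^\star$ bounded and bounded below, hence injective with closed range, and $(iii)$ upgrades it to an isomorphism $\Bcal\to\ell^2(I)$. Passing to the adjoint, and using reflexivity of $\Bcal$ together with $\ell^2(I)\simeq[\ell^2(I)]'$ so that $T$ is the adjoint of $T^\star$, the operator $T\colon\ell^2(I)\to\Bcal'$ becomes bounded and boundedly invertible as well. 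Its two-sided bounds then supply the Riesz estimates \eqref{eq:frame} (the upper one from the Bessel bound via $\|T\|=\|T^\star\|$, the lower one by transferring the bound on $(T^\star)^{-1}$ through the duality pairing), while $\range{T}=\Bcal'$ yields the spanning property $\Bcal'=\overline{\finspan{f_i}_{i\in I}}$. These are exactly the defining properties of a Riesz basis, so $(i)$, $(ii)$, $(iii)$ are equivalent.

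It remains to attach $(iv)$. Granting $(iii)$, surjectivity of $T^\star$ makes $\bs G$ the identity on $\ell^2(I)$, so reading off its matrix entries gives $(f_i,\tilde f_j)_{\Bcal'\times\Bcal}=\delta_{i,j}$ for the canonical dual $\tilde f_j=S^{-1}f_j$; hence $\{\tilde f_j\}_{j\in I}$ is biorthogonal to $\{f_i\}_{i\in I}$. Uniqueness follows because the difference of two biorthogonal sequences is annihilated by every $f_i$, hence by all of $\Bcal'$ since $\finspan{f_i}_{i\in I}$ is dense by $(i)$, hence vanishes by reflexivity. That $\{\tilde f_j\}_{j\in I}$ is itself a Riesz basis in $\Bcal=(\Bcal')'$ I would obtain by applying the already-established equivalence $(i)\Leftrightarrow(ii)$ to the dual frame, whose synthesis operator $\widetilde T=S^{-1}T$ is injective because $S^{-1}$ is bijective and $T$ is injective. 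Conversely, $(iv)\Rightarrow(ii)$ is short: if $T\bs\alpha=\sum_{i\in I}\alpha_if_i=0$, then pairing with $\tilde f_j$ and using continuity of the pairing together with biorthogonality gives $\alpha_j=0$ for every $j$.

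The step I expect to be most delicate is $(iii)\Rightarrow(i)$: transporting bijectivity, and the sharp constants, from $T^\star$ to $T$ requires careful adjoint bookkeeping across the three spaces $\Bcal$, $\ell^2(I)$ and $\Bcal'$, and leans essentially on reflexivity of $\Bcal$ and the self-duality of $\ell^2(I)$. A secondary point to handle cleanly is the self-referential use of the theorem for the dual frame in $(iii)\Rightarrow(iv)$; this is legitimate only because the equivalence $(i)\Leftrightarrow(ii)\Leftrightarrow(iii)$ is proven first and then merely instantiated for the sequence $\{\tilde f_j\}_{j\in I}\subset(\Bcal')'=\Bcal$.
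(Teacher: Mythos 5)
Your proposal is correct, but there is nothing in the paper to match it against: the paper does not prove Theorem~\ref{thm:charRiesz} at all, it imports the statement from \cite{chst03}. What you have written is therefore a genuinely independent, self-contained proof, and its distinguishing feature is that it runs entirely on the paper's own $\ell^2$-machinery: the identities $TT^\dagger T=T$ and $T^\dagger TT^\dagger=T^\dagger$ from Lemma~\ref{lem:2}, the fact that $\bs G=T^\dagger T$ is the orthogonal projection onto the closed subspace $\range{T^\star}$, reflexivity of $\Bcal$, and the self-duality $\ell^2(I)\simeq[\ell^2(I)]'$. This buys a very clean core: $\kernel{T}=\kernel{\bs G}=\range{T^\star}^{\perp}$ gives $(ii)\Leftrightarrow(iii)$ in three lines, $(iii)\Rightarrow(i)$ is pure adjoint bookkeeping (legitimate, since the paper records that $T$ is the Banach adjoint of $T^\star$), and $(iii)\Rightarrow(iv)$ reads biorthogonality directly off $\bs G=\identity{\ell^2(I)}$ via the paper's matrix representation $\bs G=[(f_i,\tilde f_j)_{\Bcal'\times\Bcal}]_{i,j\in I}$; the self-application of $(ii)\Rightarrow(i)$ to the dual frame is also legitimate, since the paper establishes beforehand that $\widetilde{\bs f}\subset\Bcal=(\Bcal')'$ is an $\ell^2$-frame for the reflexive space $\Bcal'$. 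By contrast, the source the paper cites works in the general $p$-frame setting, where none of this Hilbert-space geometry (orthogonal complements, pseudo-inverses, the Gram projection) is available, so your argument is shorter but less general.

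Three small blemishes, none fatal. First, in $(i)\Rightarrow(ii)$ you wrote that the lower Riesz bound ``forces $T\bs\alpha=\bs0$ whenever $T\bs\alpha=0$''; you mean $\bs\alpha=\bs 0$, and you should add the one-line density step extending \eqref{eq:frame} from finite sequences to all of $\ell^2(I)$ before invoking it. Second, in the uniqueness part of $(iv)$, what kills an element of $\Bcal$ annihilated by all of $\Bcal'$ is Hahn--Banach (injectivity of the canonical embedding), not reflexivity; simpler still, apply the lower frame bound \eqref{eq:frame2} to $\tilde f_j-g_j$ and skip the density argument entirely. Third, in $(iii)\Rightarrow(i)$ it is worth writing out that $\|T^{-1}\|=\|(T^\star)^{-1}\|$ because the inverse of the adjoint is the adjoint of the inverse, so the Riesz bounds come out as $A$ and $B$ exactly; this is the ``delicate'' step you flagged, and your sketch of it is sound.
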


One can easily show that the dual Riesz basis $\widetilde{\bs f}$ is 
given by
\eqref{eq:canonicalDual}. Indeed, there holds
\begin{equation}\label{eq:dualRieszCharact}
  S \tilde{f}_j = \sum_{i\in I}
  (f_i,\tilde{f}_j)_{\Bcal'\times\Bcal}f_i = f_j.
\end{equation}
Hence, for Riesz bases,
we have the following characterization of the inner products
\eqref{eq:InnerPB} and \eqref{eq:InnerPBprime}, respectively.
\begin{lem.}
  Let \({\bs f}\subset\Bcal'\) be a Riesz basis
  with biorthogonal basis
  \(\widetilde{\bs f}\). Then, there holds 
\[
\langle f_i,f_j\rangle_{\Bcal'}
=\big\langle\tilde{f}_i,\tilde{f}_j\big\rangle_{\Bcal}=\delta_{i,j}.
\]
Hence, these bases amount to orthonormal bases of \(\Bcal'\) and \(\Bcal\), 
respectively. 
\end{lem.}

\begin{proof}
From Theorem~\ref{thm:charRiesz}, we have the existence and uniqueness of the
biorthogonal basis. From \eqref{eq:dualRieszCharact}, 
we have \(S\tilde{f}_i=f_i\)
or, equivalently, \(\tilde{f}_i=S^{-1}f_i\).
Thus, we obtain 
\[
\langle f_i,f_j\rangle_{\Bcal'}=(f_i,S^{-1}f_j)_{\Bcal'\times\Bcal}
=(f_i,\tilde{f}_j)_{\Bcal'\times\Bcal}=\delta_{i,j}
\]
and, analogously,
\[
\langle\tilde{f}_i,\tilde{f}_j\rangle_{\Bcal}
=(SS^{-1}f_i,S^{-1}f_j)_{\Bcal'\times\Bcal}
=(f_i,\tilde{f}_j)_{\Bcal'\times\Bcal}=\delta_{i,j}.
\]
\end{proof}
The converse is also true, if ${\bs f}\subset \Bcal'$ is an orthonormal 
basis with the Hilbert space topology, it is a Riesz basis within the 
Banach space topology. 

\subsection{Matrices acting on sequences in Banach spaces}
In this section, let \(K\subset\Nbb\) also denote a finite or infinite 
index set.
Given a matrix ${\bs M}=[m_{k,i}]_{k\in K, i\in I}\in \mathbb{R}^{K \times I}$, 
such that the rows form 
$\ell^2$-sequences, and a Bessel sequence for $\Bcal$ denoted as 
${\bs\Phi}=\{ \varphi_i \}_{i \in I}$, we define the action of ${\bs M}$ on 
$\bs\Phi$, denoted by ${\bs M}{\bs\Phi}$, through
\begin{equation} \label{eq:matrseq1} 
[{\bs M}{\bs\Phi}]_k 
\isdef \sum \limits_{i \in I} m_{k,i}\varphi_i\subset\Bcal'
\quad\text{for all }k\in K.
\end{equation}
Intuitively, we consider sequences of functions as column vectors and
define the action of a matrix by the usual matrix-vector product.
With this, we get a mapping ${\bs M}$ from the sets of Bessel sequence for 
$\Bcal$ into the set of sequences in $\Bcal'$. By assumption, 
\eqref{eq:matrseq1} is well-defined, also for infinite index sets.

Using this notation we can reinterpret the reconstruction property of 
frames as follows.
\begin{lem.} Let ${\bs f}\subset\Bcal'$ be a frame
  for a Banach space \(\Bcal\). Then, there holds that 
  $$ f_i = [{\bs G}{\bs f}]_i.
  $$
\end{lem.}
\begin{proof}
We have
  \[
    [{\bs G}{\bs f}]_i=\sum_{j\in I}g_{i,j}f_j=
    \sum_{j\in I}g_{j,i}f_j=
        \sum_{j\in I}(f_j,\tilde{f}_i)_{\Bcal'\times\Bcal}f_j=S\tilde{f}_i=f_i,
  \]
  due to the symmetry of \({\bs G}\), cp.\ \eqref{eq:InnerPBprime},
  the definition of the frame operator \eqref{eq:frameOp} and the
  definition of the canonical dual frame \eqref{eq:canonicalDual}.
 \end{proof}

It is easy to see that if ${\bs M}\in\Rbb^{I\times I}$ is a bounded operator 
on $\ell^2(I)$, cp.\  \cite{cron71},
and thereby fulfilling the criteria above, then, given
a Bessel sequence ${\bs\Phi}=\{ \varphi_i \}_{i \in I}$, the sequence
${\bs M}{\bs\Phi}$ is again a Bessel sequence with bound 
$B\norm{\ell^2 \rightarrow \ell^2}{\bs M}$. 
In this case the global coefficient of the new sequence is the matrix 
multiplication of the local coefficients as 
\begin{equation}\label{eq:matrcoeff1} 
\begin{aligned} 
([{\bs M}{\bs\Phi}]_k,v)_{\Bcal'\times\Bcal} &=
\left(\sum_{i\in I}m_{k,i}\varphi_i,v\right)_{\Bcal'\times\Bcal}
=\sum_{i\in I}m_{k,i}(\varphi_i,v)_{\Bcal'\times\Bcal} \\
&=  \langle {\bs m}_k, T^\star_{\bs\Phi} v \rangle_{\ell^2}=
[{\bs M}T^\star_{\bs\Phi} v]_k,
\end{aligned}
\end{equation}
where \(T^\star_{\bs\Phi}\colon\Bcal'\to\ell^2(I)\) is the analysis operator
associated to \({\bs\Phi}\)and ${\bs m}_k$ the $k$-th row of the matrix 
${\bs M}$, i.e. \([{\bs m}_k]_i = m_{k,i} \). 
This means that the analysis operator of \({\bs M\bs\Phi}\) 
is given by \(T^\star_{{\bs M\bs\Phi}}={\bs M}T^\star_{\bs\Phi}\).
Similarly, we obtain for the corresponding
frame operator that 
\begin{equation}\label{eq:transformedFrameOp}
S_{\bs M\bs \Phi} = T_{\bs\Phi}{\bs M}^\star{\bs M}T^\star_{\bs\Phi}.
\end{equation}

Let ${\bs f}\subset\Bcal'$ be a frame for \(\Bcal\) with bounds $A,B$ and let 
${\bs M}\colon\ell^2(I)\to\ell^2(I)$ have closed range.
Then there exits bounds $0<\underline{c}\leq\overline{c}<\infty$ such that
\begin{equation} \label{eq:star} \underline{c} \norm{\ell^2}{{\bs\alpha}}^2 
  \le \norm{\ell^2}{\bs M \bs\alpha}^2 \le 
\overline{c}\norm{\ell^2}{\bs\alpha}^2\quad\text{for all }
{\bs\alpha} \in \kernel{\bs M}^\perp.
\end{equation}
Hence, if $\range{T^\star} \subseteq \kernel{\bs M}^\perp$ for a frame $\bs f$
with bounds $A,B$, we have that
${\bs M}{\bs f}$ is a frame with bounds $\underline{c} A, \overline{c} B$. 

The next lemma gives a characterization of the resulting frame in case that the
underlying matrix is an isometry.
\begin{lem.}\label{lem:basisTrafo}
Let \({\bs f}\subset\Bcal'\) be a frame for \(\Bcal\) and let 
\({\bs U}\colon\ell^2(I)\to\ell^2(I)\) be an isometry, i.e.,
\({\bs U}^\star{\bs U}=\identity{\range{T^\star}}\).
Then 
\[
\psi_i\isdef\sum_{j\in I}u_{i,j}f_{j},\quad i\in I,
\]
is a Parseval frame with respect to the topology induced by the
inner product \(\langle\cdot,\cdot\rangle_{\Bcal'}\).
Analogously, 
\[
  \widetilde{\psi}_i\isdef\sum_{j\in I}u_{i,j}S^{-1}f_{j}=
\sum_{j\in I}u_{i,j}\tilde{f}_{j},\quad i\in I,
\]
is a Parseval frame with respect to the topology induced by the
inner product \(\langle\cdot,\cdot\rangle_{\Bcal}\).
In particular, the sequences \(\{\psi_i\}_{i\in I}\) 
and \(\{\tilde{\psi}_i\}_{i\in I}\) are dual frames in the
$\Bcal'\times \Bcal$-sense.

If, additionally, \({\bs f}\subset\Bcal'\) is a Riesz basis for \(\Bcal'\)
and \({\bs U}\colon\ell^2(I)\to\ell^2(I)\) is unitary, i.e.,
\({\bs U}{\bs U}^\star={\bs U}^\star{\bs U}=\identity{\ell^2(I)}\),
then
\[
  \langle \psi_i,\psi_j\rangle_{\Bcal'}=\delta_{i,j}\quad\text{and}\quad
\big\langle \widetilde{\psi}_i,\widetilde{\psi}_j\big\rangle_{\Bcal}
=\delta_{i,j}\quad
\text{for }i,j\in I,
\]
respectively. This means that the sequences \(\{\psi_i\}_{i\in I}\) 
and \(\{\tilde{\psi}_i\}_{i\in I}\) are orthonormal bases for \(\Bcal'\)
and \(\Bcal\), respectively.
\end{lem.}

\begin{proof}
The sequence ${\bs f}$ forms a Parseval frame for the Hilbert space $\Bcal'$ 
with respect to Hilbert space topology, cp.\ \eqref{eq:starnew}. 
We set $\bs\Psi \isdef {\bs U}{\bs f} $ and 
$\widetilde{\bs\Psi}\isdef {\bs U}\widetilde{\bs f}$, respectively. In view
of \eqref{eq:transformedFrameOp}, this implies
\(S_{\bs\Psi}=S_{\bs f}\) and \(S_{\widetilde{\bs\Psi}}=S_{\widetilde{\bs f}}\),
respectively. Hence, they are
Parseval frames for $\Bcal'$ and $\Bcal$, respectively, with respect to the
Hilbert space topology and dual to each other in the $\Bcal' \times \Bcal$-sense. 

Now let $\bs f$ be a Riesz basis, then, 
with the convention 
\be \label{eq:starstarnew} \langle{\bs f},{\bs f}^\intercal\rangle_{\Bcal}\isdef[
\langle f_i,f_j\rangle_{\Bcal}]_{i,j\in I},
\ee 
we obtain
\[
\langle{\bs\Psi},{\bs\Psi}^\intercal\rangle_{\Bcal}
={\bs U}\langle{\bs f},{\bs f}^\intercal\rangle_{\Bcal}{\bs U^\star}
={\bs U}\identity{\ell^2(I)}{\bs U}^\star
={\bs U}{\bs U}^\star=\identity{\ell^2(I)}.
\]
The proof for \(\widetilde{\bs\Psi}\) is analogous.
\end{proof}

For Riesz bases we can also go beyond the isometric setting, by using continuous,
invertible operators \({\bs M}\colon\ell^2(I)\to\ell^2(I)\).
\begin{cor.}
\label{cor:basisTrafo}
Let \({\bs f}\subset\Bcal'\) be a Riesz basis and let 
\({\bs M}\colon\ell^2(I)\to\ell^2(I)\) be continuous and invertible. 
Then $\widetilde{{\bs M}{\bs f}} = {\bs M}^{-1} \widetilde{\bs f}$. 
\end{cor.}

\section{Finite sets of functionals}\label{sec:GenFinSamp}

In \cite{HM22}, a finite set of points 
$\{{\bs x}_1, \ldots,{\bs x}_N\}\subset\Rbb^d$ was considered,
and the space spanned by the Dirac-\(\delta\)-distributions 
$\delta_{{\bs x}_i}$, \(i=1,\ldots,N\) was used to define
samplets. Our goal is to extend this approach to more 
general finite sets of functionals in \(\Bcal'\). 

To this end, we start from a finite set of functionals
\({\bs f}_N\isdef \{f_{1},\ldots,f_{N}\}\subset\Bcal'\). 
We denote the linear subspace
spanned by these functionals by \(\Xcal_N'\isdef\finspan{{\bs f}_{N}}\).

Certainly, we can investigate the \emph{finite analysis operator}
\[
T_{N}^\star\colon\Bcal\to\Rbb^N,\quad 
T_{{N}}^\star v\isdef\big\{(f_{i},v)_{\Bcal'\times\Bcal}\big\}_{i=1}^N,
\]
as well as the \emph{finite synthesis operator}\ 
\[
T_{{N}}\colon\Rbb^N \to\Xcal_N'\subset\Bcal',\quad
T_{{N}}{\bs\alpha}\isdef\sum_{i=1}^N\alpha_if_{i}.
\]
Their concatenation then gives rise to the \emph{finite frame operator}
\begin{equation}\label{eq:restrictedS}
S_{{N}}\isdef T_{{N}} T_{{N}}^\star
\colon\Bcal\to\Xcal_N'\subset\Bcal',\quad S_{{N}} v
=\sum_{i=1}^N(f_{i},v)_{\Bcal\times\Bcal'}f_{i}.
\end{equation}

\begin{remark}
We remark that the setting considered here is akin to the one
considered in \emph{optimal recovery}. Optimal recovery deals
with the question of optimally approximating maps of functions,
belonging to a specific class, from limited information. In this context,
the functionals \(f_{i_1},\ldots,f_{i_N}\) 
are then referred to as \emph{information functionals}, while the analysis
operator \(T_N^\star\) is referred to as 
\emph{information operator}.
In particular, the synthesis operator \(\widetilde{T}_N=(T_N^\star)^\dagger\) 
of the canonical dual frame constitutes 
an optimal algorithm for the recovery of an object \(v\in\Bcal\) from
the information \(T_N^\star v\), cp.\ \cite{OS19}.
For further details, we refer to the survey
 \cite{MR77}.
\end{remark}

Since the finite frame operator \eqref{eq:restrictedS} has finite-dimensional 
range, it is particularly continuous and its range is closed.
As \(\Xcal_N'\) is finite dimensional, we can
select a subset ${\bs f}_M\subset{\bs f}_N$
with \(M\leq N\), which is a basis for $\Xcal_N'$, i.e., by the reflexivity we 
have for all
$u' \in \Xcal'_N$ 
\[
u' 
= \sum \limits_{i = 1}^M 
(u', \hat f_i)_{\Bcal'\times \Bcal} f_i,
\]
where $\hat{f}_i$ denote the coefficient functionals on $\Xcal_N'$. By the 
Hahn-Banach theorem, they can be extended to functionals on $\Bcal'$.
Considering this sequence $\widehat{{\bs f}_M} = \{ \hat{f}_i \} \subseteq\Bcal$
and its finite frame operator $\widehat S_M$ we have that 
\[
\Bcal'=\ker(\widehat{S}_M)
\oplus{\range{S_N}}\quad\text{and}\quad
  \Bcal=\ker(S_N) \oplus{\range{\widehat{S}_M}}.
\]
We set 
\[\Xcal_N\isdef\range{\widehat S_M}\]

and note that the restriction
\(
S_N\colon \Xcal_N\to\Xcal_N'
\)
is an invertible operator. 
Especially, denoting its smallest eigenvalue by \(A_N\)
and its largest eigenvalue by \(B_N\), respectively, it is evident that 
\({\bs f}_N\) is a frame for \(\Xcal_N\subset\Bcal\)
with frame bounds \(A_N, B_N\).
As a consequence, Lemma~\ref{lem:basisTrafo} applies with the obvious modifications 
also to the present finite dimensional situation.
Even so, the stability is not necessarily guaranteed for \(N\to\infty\).

However, under the stronger condition that 
\({\bs f}_N\subset{\bs f}\subset\Bcal'\),
where \({\bs f}\) is a Riesz basis for \(\Bcal\), we have that 
\({\bs f}_N\) is a Riesz sequence itself, see \cite{ole1}.  
Moreover, the inner products \(\langle\cdot,\cdot\rangle_{\Bcal}\) on 
\(\Xcal_N\) and \(\langle\cdot,\cdot\rangle_{\Bcal'}\) on \(\Xcal_N'\),
respectively, are fully characterized by the sequence \({\bs f}_N\). 
In this case, the sequence introduced above satisfies
$\widehat{{\bs f}_M} = \widetilde{{\bs f}_N}$ with $M=N$, i.e., 
$\Xcal_N = \finspan{\widetilde{{\bs f}_N}}$.

\begin{lem.} \label{lem:4} 
Let ${\bs f}$ be a Riesz basis for $\Bcal'$ and let 
\({\bs f}_N =\{f_{i_1},\ldots,f_{i_N}\}\subset{\bs f}\) be a finite subset 
with $\Xcal_N'=\finspan{{\bs f}_N}$. 
Then ${\bs f}_N$ is a Riesz basis for $\Xcal_N'$ with biorthogonal basis
\(\widetilde{\bs f}_N\subset\Xcal_N\), where 
\(\tilde{f}_{i_j}=S^{-1}{f}_{i_j} = S_N^{-1}{f}_{i_j}\). 
Furthermore, letting
\(\langle u,v\rangle_{\Xcal_N'} 
\isdef\big(u,S_{N}^{-1}v\big)_{\Bcal'\times\Bcal}\),
there holds for any \(u',v'\in\Xcal_N'\) that
\[
\langle u',v'\rangle_{\Xcal_N'}=\langle u',v'\rangle_{\Bcal'}
\quad\text{for all
}u',v'\in\Xcal_N'.
\]
The analogous result holds for \(\langle u,v\rangle_{\Xcal_N} 
\isdef\big(S_Nu,v\big)_{\Bcal'\times\Bcal}\).
\end{lem.}
\begin{proof}Since \(\Xcal_N'\) is finite dimensional, it suffices to show
that the basis ${\bs f}_N$ 
satisfies 
\(\langle f_{i_j},f_{i_k}\rangle_{\Xcal_N'}=\delta_{j,k}\). 
Let \(\tilde{f}_{i_j}=S^{-1}f_{i_j}\) be the corresponding subsequence of 
the canonical dual basis. There holds
\[
S_{N}\tilde{f}_{i_j}=
\sum_{k=1}^N(f_{i_k},\tilde{f}_{i_j})_{\Bcal'\times\Bcal}f_{i_k}=f_{i_j}
\]
due to the biorthogonality from Theorem~\ref{thm:charRiesz}. Hence, we
have 
\[
S_{N}^{-1}f_{i_j}=\tilde{f}_{i_j}=S^{-1}f_{i_j}\quad\text{for }j=1,\ldots, N,
\] and, 
therefore,
\[
\langle f_{i_j},f_{i_k}\rangle_{\Xcal_N'}
=(f_{i_j},S_N^{-1}f_{i_k})_{\Bcal'\times\Bcal}
=(f_{i_j},S^{-1}f_{i_k})_{\Bcal'\times\Bcal}
=(f_{i_j},\tilde{f}_{i_k})_{\Bcal'\times\Bcal}=\delta_{j,k},
\]
as claimed. 
\end{proof}

We remark that, in the situation of the previous lemma, the space 
\(\big(\Xcal_N',\langle\cdot,\cdot\rangle_{\Xcal_N'}\big)\) is always 
isometrically isomorphic to \(\Rbb^N\) and we have
\[
\langle u',v'\rangle_{\Xcal_N'}
=\left\langle\sum_{j=1}^N\alpha_j
f_{i_j},\sum_{j=1}^N\beta_jf_{i_j}\right\rangle_{\Xcal_N'}
={\bs\alpha}^\intercal{\bs\beta}.
\]

\section{Construction of generalized samplet bases}\label{sct:construction}
\subsection{Multiresolution analysis of functionals}
From now on, let \(\Omega\subset\Rbb^d\) be a domain.
For the construction of samplet bases, we make the assumption that
\(\Bcal'\) is imbedded into the space of \emph{compactly supported
distributions}, i.e., \(\Bcal'\subset\Ecal'(\Omega)\), where \(\Ecal(\Omega)
\isdef C^\infty(\Omega)\) is endowed with the usual limit of Fr\'echet spaces 
topology. This assumption is, for example, satisfied if \(\Ecal(\Omega)\) is a
dense subspace of \(\Bcal\). For a given distribution \(f\in\Ecal'(\Omega)\),
we define its \emph{restriction} \(f_O\) to \(O\subset\Omega\) by the property
\[
  (f_O,v)_{\Bcal'\times\Bcal}=  (f,v)_{\Bcal'\times\Bcal}
  \quad\text{for all }v\in\Ecal(O),
\]
with the obvious convention that \(\Ecal(O)\) is considered as a subspace of
\(\Ecal(\Omega)\). Then, its \emph{support} is defined as
\[
  \support{f}\isdef\{{\bs x}\in\Omega: 
\text{there is no open set }O\ni{\bs x}\text{ such that }
f_O=0\}.
\]
For example, we have \(\support{\delta_{\bs x}}=\{{\bs x}\}\).
For the particular case \(f\in\Ecal(\Omega)\), the support of the
distribution
\[
  (T_f,v)_{\Bcal'\times\Bcal}=(T_f,v)_\Omega\isdef\int_\Omega fv\d{\bs x}
\]
coincides with the usual support of the function \(f\).

Now let \({\bs f}_N=\{f_1,\ldots, f_N\}\subset\Bcal'\) be a finite set of 
functionals. As before we set 
\[
\Xcal'\isdef\Xcal_N'=\finspan{{\bs f}_N}.
\]
The construction of samplet bases relies on a multiresolution
analysis
\[
\Xcal_0'\subset\Xcal_1'\subset\cdots\subset\Xcal_J'=\Xcal', 
\]
constructed by clustering the functionals with respect to the similarity of 
their supports, 
obtained from a suitable \emph{cluster tree}, as defined below. 

\begin{def.}\label{def:cluster-tree}
Let \(X\) be a set and let $\mathcal{T}=(V,E)$ be a tree with vertices $V$ 
and edges $E$.
We define its set of leaves as
\(
\mathcal{L}(\mathcal{T})\isdef\{\nu\in V\colon\nu~\text{has no children}\}.
\)
The tree $\mathcal{T}$ is a \emph{cluster tree} for
\(X\), if
\(X\) is the {root} of $\mathcal{T}$ and
all $\nu\in V\setminus\mathcal{L}(\mathcal{T})$
are disjoint unions of their children.
The \emph{level} \(j_\nu\) of $\nu\in\mathcal{T}$ is its distance from
the root. The \emph{depth} of \(\Tcal\) is given by 
\(J\isdef\max\{j_\nu: \nu\in\Tcal\}\). 
Finally, we define the set of clusters
on level $j$ as
\(
\mathcal{T}_j\isdef\{\nu\in\mathcal{T}\colon \nu~\text{has level}~j\}.
\)
\end{def.}

In the original work \cite{HM22}, exclusively the situation  
\({\bs f}_N=\{\delta_{{\bs x}_1},\ldots,\delta_{{\bs x}_N}\}\) has been
considered, cp.\ Example~\ref{ex:RKHS}.
The cluster tree was then constructed by longest edge bisection of
the bounding box of the 
supports of the Dirac-$\delta$-distributions with respect to the
Wasserstein distance
\(d_W(\delta_{{\bs x}_i},\delta_{{\bs x}_j})=\|{\bs x}_i-{\bs x}_j\|_2\).
In view of more general functionals, we resort here
to a spectral clustering approach, see \cite{vLux07} and the
references therein. For a more general view on the topic of clustering
in abstract spaces, we refer to \cite{TSS15}, see also \cite{KW78}.
For the reader's convenience, we recall the ideas from \cite{vLux07}.
To this end, we assume that \(d\colon\Xcal'\times\Xcal'\to[0,\infty)\)
is a (pseudo-) metric, which reflects the distance between the supports
of the distributions under consideration, for example,
\[
  d(f_i,f_j)=\mydist{\support{f_i},\support{f_j}},
\]
where \(\mydist{A,B}\isdef\inf\{\|{\bs a}-{\bs b}\|:{\bs a}\in A,{\bs b}\in B\}\)
is the usual distance of sets.
We introduce the 
\emph{pairwise similarities}
\[
  w_{ij}\isdef s(f_i,f_j),
\]
for some symmetric similarity function \(s\colon\Xcal'\times\Xcal'\to[0,\infty)\).
Collecting the pairwise similarities yields the data similarity matrix
\[
{\bs W}\isdef[w_{i,j}]_{i,j=1}^N.
\]

We mention some common examples of similarity functions in the following:
\begin{enumerate}
    \item 
The choice
\[
s(f_i,f_j)\isdef\begin{cases}1,&\text{if }d(f_i,f_j)<\varepsilon,\\
0,&\text{else}\end{cases}
\]
makes \({\bs W}\) correspond to the adjacency matrix of the 
\emph{$\varepsilon$-neighborhood graph}. 
\item Setting
\[
  s(f_i,f_j)\isdef\begin{cases}1,&\text{if $f_i\in\textnormal{$k$NN}(f_j)$
     or $f_j\in\textnormal{$k$NN}(f_i)$},\\
0,&\text{else},
\end{cases}
\]
where $f_i\in\textnormal{$k$NN}(f_j)$ is the set of \(f_i\)'s $k$-nearest
neighbors, 
makes \({\bs W}\) the adjacency matrix of the 
\emph{mutual $k$-nearest neighbors graph}. 
\item Finally, one may consider globally
supported similarity functions, such as the
Gaussian kernel
\[
  s(f_i,f_j)\isdef e^{-\frac{d(f_i,f_j)^2}{2\ell^2}},\quad\text{for }
  \ell>0,
\]
resulting in fully connected, weighted similarity graphs.
\end{enumerate}
We introduce the diagonal
matrix \({\bs D}\isdef\mydiag{[\sum_{j=1}^N w_{ij}]_{i=1}^N}\)
and the \emph{unnormalized graph Laplacian}
\[
  {\bs L}\isdef{\bs D}-{\bs W}.
\]
There holds the following result, see, for example, \cite[Section 2]{Moh91}
or \cite[Propositions 1 and 2]{vLux07}.

\begin{lem.} The unnormalized graph Laplacian satisfies
\[
  {\bs x}^\intercal{\bs L}{\bs x}=\frac 1 2\sum_{i,j=1}^N w_{ij}(x_i-x_j)^2.
\]
Consequently, \({\bs L}\) is symmetric and positive semi-definite and 
the vector \({\bs 1}=[1,\ldots,1]^\intercal\in\Rbb^N\) is an eigenvector
corresponding to the eigenvalue $0$. The multiplicity of the eigenvalue $0$
corresponds to the number of connected components of the graph associated
to \({\bs W}\).
\end{lem.}

Subdividing the graph corresponding to \({\bs W}\)
by means of the \emph{Fiedler vector}, i.e., the eigenvector
corresponding to the second biggest eigenvalue of \({\bs L}\), yields 
a bisection of the set \(X\), cp.\ \cite{Fie73}. The procedure is
detailed in Algorithm~\ref{algo:bisection}, cf.\ \cite{vLBB08}.
The latter reference particularly provides conditions under which the
bisection becomes stable in the limit \(N\to\infty\).
\begin{figure}[htb]
\small
\begin{center}
\begin{minipage}{\textwidth}
\begin{algorithm}[H]
	\caption{Spectral bisection algorithm}
	\label{algo:bisection}	
  \KwData{Similarity matrix \({\bs W}\)}
  \KwResult{Clusters \(\tau_{1},\tau_{2}\)}
	
	\Begin{
    Compute the eigenvector \({\bs v}\) corresponding to the second
    eigenvalue of \({\bs L}\).

    set \(\tau_{1}\isdef\{i:v_i \geq 0\}\) and \(\tau_{2}\isdef\{i:v_i < 0\}\)
		}
\end{algorithm}
\end{minipage}
\end{center}
\end{figure}

Recursively applying Algorithm~\ref{algo:bisection} for each of the
resulting subgraphs obtained by deleting all the edges connecting 
\(\tau_{1}\) and  \(\tau_{2}\), with the correspondingly
modified adjacency matrices \({\bs W}_{\tau_{1}}\),\({\bs W}_{\tau_{2}}\)
and Laplacian 
\({\bs L_{\tau_{1}}}\), \({\bs L_{\tau_{2}}}\), 
yields the desired cluster tree,
cf.\ Definition \ref{def:cluster-tree}. 
\subsection{Computation of the basis transformation}
In Section~\ref{sec:GenFinSamp}, we have seen that the functionals 
\({\bs f}_N\) form a Parseval frame with respect to the topology induced by
\(S_N\) for an appropriately chosen space \(\Xcal_N\subset\Bcal\).
In view of Lemma~\ref{lem:basisTrafo}, a generalized samplet basis is
now obtained by constructing a suitable isometry \({\bs U}\in\Rbb^{N\times N}\).

The cluster tree provides a hierarchical structure that we
exploit in the construction of the samplet basis.
We introduce a \emph{two-scale} 
transform between functionals associated to a cluster $\nu$ of level
$j$ and its son clusters on level $j+1$. 
To this end, we create \emph{scaling functionals}
$\mathbf{\Phi}_{j}^{\nu} = \{ \varphi_{j,k}^{\nu} \}_k$ and
\emph{samplets} $\mathbf{\Psi}_{j}^{\nu} = \{\psi_{j,k}^{\nu} \}_k$
as linear combinations of the scaling 
functionals $\mathbf{\Phi}_{j+1}^{\nu}$ of $\nu$'s child clusters. 
Denoting the number of elements by \(n_{j+1}^\nu
\isdef|\mathbf{\Phi}_{j+1}^{\nu}|\), this results in the
\emph{refinement relations}
\[
\varphi_{j,k}^{\nu}
=\sum_{\ell=1}^{n_{j+1}^\nu}q_{j,\Phi,\ell,k}^{\nu}\varphi_{j+1,\ell}^{\nu}
\quad\text{and}\quad
\psi_{j,k}^{\nu}
=\sum_{\ell=1}^{n_{j+1}^\nu}q_{j,\Psi,\ell,k}^{\nu}
\varphi_{j+1,\ell}^{\nu},
\]
for certain coefficients \(q_{j,\Phi,\ell,k}^{\nu}
=[{\bs Q}_{j,\Phi}^{\nu}]_{l,k}\) and
\(q_{j,\Psi,\ell,k}^{\nu}=[{\bs Q}_{j,\Psi}^{\nu}]_{l,k} \).
As before, collecting functionals as column vectors, these relations
may be written in matrix notation as
\begin{equation}\label{eq:refinementRelation}
\begin{bmatrix}\mathbf{\Phi}_{j}^{\nu}\\ \mathbf{\Psi}_{j}^{\nu}
\end{bmatrix}
 \isdef 
 [{\bs Q}_j^{\nu}]^\intercal\mathbf{\Phi}_{j+1}^{\nu}
=
 \begin{bmatrix} {\bs Q}_{j,\Phi}^{\nu} \\ {\bs Q}_{j,\Psi}^{\nu}
\end{bmatrix}
  \mathbf{\Phi}_{j+1}^{\nu}.
\end{equation}

In order to provide vanishing moments with respect to a finite-dimensional space
of primitives \(\Pcal\subset\Bcal\) and orthonormality in the context of 
Lemma~\ref{lem:basisTrafo}, the transformation \({\bs Q}_{j}^{\nu}\) has to be
appropriately constructed. For this purpose, we consider an orthogonal
decomposition of the \emph{moment matrix}
\begin{equation}\label{eq:momentMatrix}
  {\bs M}_{j+1}^{\nu}
  \isdef
  \big\langle {\bs p}_{m_q}, 
    {\left(\mathbf{\Phi}_{j+1}^{\nu}\right)}^\intercal
    \big\rangle_{\Bcal'\times\Bcal} \in\Rbb^{m_{\Pcal}\times n_{j+1}^\nu}
\end{equation}
with the notation adopted from \eqref{eq:starstarnew} and
\({\bs p}_{m_{\Pcal}}\isdef\{p_1,\ldots, p_{m_{\Pcal}}\}\), 
\(m_{\Pcal}\isdef\dim(\Pcal)\), being a basis of \(\Pcal\).
A canonical choice for \(\Pcal\)
is the set of polynomials up to degree up to degree $q$.
Following the idea
from \cite{AHK14}, we employ the QR decomposition, which results in
samplets with an increasing number of vanishing moments as outlined 
subsequently. Letting
\begin{equation}\label{eq:QR} 
  ({\bs M}_{j+1}^{\nu})^\intercal  = {\bs Q}_j^\nu{\bs R}
  \defis\big[{\bs Q}_{j,\Phi}^{\nu} ,
  {\bs Q}_{j,\Psi}^{\nu}\big]{\bs R},
 \end{equation}
the moment matrix 
for the cluster's own scaling functionals and samplets is given by 
\begin{equation}\label{eq:vanishingMomentsQR}
  \big[{\bs M}_{j,\Phi}^{\nu}, {\bs M}_{j,\Psi}^{\nu}\big]
= {\bs M}_{j+1}^{\nu} [{\bs Q}_{j,\Phi}^{\nu} , {\bs Q}_{j,\Psi}^{\nu} ]
  = {\bs R}^\intercal,
\end{equation}
cp.\ \eqref{eq:refinementRelation} and \eqref{eq:momentMatrix}.
Since ${\bs R}^\intercal$ is a lower triangular matrix, the first $k-1$ 
entries in its $k$-th column are zero. This corresponds to 
$k-1$ vanishing moments for the $k$-th functional generated 
by the transformation
${\bs Q}_{j}^{\nu}=[{\bs Q}_{j,\Phi}^{\nu} , {\bs Q}_{j,\Psi}^{\nu} ]$. 
By defining the first $m_{\Pcal}$ functionals as scaling functionals and 
the remaining ones as samplets, we obtain samplets that are orthogonal to
the primitives in \(\Pcal\). To guarantee that each leaf cluster
\(\nu_{\text{leaf}}\in\Tcal\) contains samplets, the condition $
|\nu_{\text{leaf}}|>m_{\Pcal}$ has to be met.

For leaf clusters, we define the functionals 
${\bs f}_N=\{f_1,\ldots,f_N\}$ as scaling functionals, i.e.,
$\mathbf{\Phi}_J^{\nu}\isdef\{f_i : i\in\nu \}$.
The scaling functionals of all clusters on a specific level $j$ 
then generate the spaces
\begin{equation}\label{eq:Vspaces}
	\Xcal'_{j}\isdef \finspan{\varphi_{j,k}^{\nu} : k\in I_j^{\Phi,\nu}
	,\ j_\nu=j},
\end{equation}
while the samplets span the detail spaces
\begin{equation}\label{eq:Wspaces}
\Scal'_{j}\isdef
	\finspan{\psi_{j,k}^{\nu} : k\in I_j^{\Psi,\nu},\ 
	j_\nu=j}.
\end{equation}
Herein, \(I_j^{\Phi,\nu}\) and \(I_j^{\Psi,\nu}\) are suitably chosen
index sets.
In particular, there holds \(\Xcal_{j+1}'=\Xcal_{j}'\oplus\Scal_{j}'\)
and, in case that the functionals
\(f_1,\ldots, f_N\) are linearly independent, \(\Xcal_{j}'\perp\Scal_{j}'\). 
The latter, as well as the fact that the samplets form an orthonormal 
basis is then a direct consequence of 
Lemma~\ref{lem:basisTrafo}. Therein, the matrix \(\bs U\) is obtained
by collecting all expansion coefficients from \eqref{eq:refinementRelation}
belonging to a particular samplet in the corresponding row of \(\bs U\).
The orthogonality of \(\bs U\) follows from
the images in \(\Rbb^N\) of the functionals of different 
clusters on the same level being orthogonal and
the orthogonality of the matrices \({\bs Q}_j^\nu\).
Finally, we remark that, for balanced binary trees \(\Tcal\), the samplet
basis can be constructed
with cost \(\Ocal(N)\), cp.\ \cite{HM22} and the references therein.

Avoiding the technical details concerning the
enumeration of samplets with respect to the cluster tree, 
we simply use the notation of Lemma~\ref{lem:basisTrafo} here.
According to \eqref{eq:matrcoeff1}, the samplets obtained by this particular 
choice of the isometry \({\bs U}\)
satisfy
\begin{equation}\label{eq:vanMom}
  (\psi_{i},p)_{\Bcal'\times\Bcal}=({\bs u}_{i},T_N^\star p)_{\ell^2}
  =0\quad\text{for all }p\in\Pcal,
\end{equation}
while for the dual samplets holds
\[
\big\langle\widetilde{\psi}_{i},p\big\rangle_{\Xcal_N}=0
\quad\text{for all }p\in\Pcal.
\]
Especially, in case when \({\bs f}_N\subset{\bs f}\) for a Riesz basis 
\({\bs f}\) in \(\Bcal'\), we even have that
\[
\big\langle\widetilde{\psi}_{i},p\big\rangle_{\Bcal}=0
\quad\text{for all }p\in\Pcal,
\]
as well as
\[
\langle\psi_{i},\psi_{i'}\rangle_{\Bcal'}=
\big\langle\widetilde{\psi}_{i},\widetilde{\psi}_{i'}
\big\rangle_{\Bcal}
=
\big(\psi_{i'},\widetilde{\psi}_{i'}\big)_{\Bcal'\times\Bcal}
=\delta_{i,i'}.
\]

\subsection{Properties of generalized samplets}
We give an abstract localization result for the samplet 
coefficients with respect to the set of primitives \(\Pcal\subset\Bcal\) and
the support of the particular samplet under consideration.
\begin{theorem} There holds for any \(v\in\Bcal\) that
  \[
    |(\psi_{i}, v)_{\Bcal'\times\Bcal}|\leq\sqrt{B_N}
    \inf_{p\in\Pcal}\|v-p\|_{\Bcal,\support{\psi_{i}}}
\]
with the semi-norm
\(\|v\|_{\Bcal,K}\isdef\inf
\{\|\tilde{v}\|_{\Bcal}: {\tilde{v}\in\Bcal\text{ and }\tilde{v}|_K=v}\}\)
for any compact set \(K\subset\Omega\) and the Bessel bound \(B_N\).
\end{theorem}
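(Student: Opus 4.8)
The plan is to bound the single pairing $(\psi_{i},v)_{\Bcal'\times\Bcal}$ by playing the two defining features of a samplet off against each other---its vanishing moments with respect to $\Pcal$ and the localization of its support---while the Bessel bound $B_N$ and the isometry $\bs U$ underlying the construction supply the constant.

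First I would use the vanishing moments. By \eqref{eq:vanMom} we have $(\psi_{i},p)_{\Bcal'\times\Bcal}=0$ for every $p\in\Pcal$, so that $(\psi_{i},v)_{\Bcal'\times\Bcal}=(\psi_{i},v-p)_{\Bcal'\times\Bcal}$ for all $p\in\Pcal$. This replaces $v$ by the approximation error $v-p$, which is exactly the quantity measured on the right-hand side. Next I would localize. Writing $K\isdef\support{\psi_{i}}$ and recalling the standing assumption $\Bcal'\subset\Ecal'(\Omega)$, the value $(\psi_{i},w)_{\Bcal'\times\Bcal}$ is determined by the behaviour of $w$ near $K$: if $w$ vanishes on a neighbourhood of $K$, then $(\psi_{i},w)_{\Bcal'\times\Bcal}=0$ by the very definition of the support. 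Consequently, for any $\tilde w\in\Bcal$ agreeing with $v-p$ on $K$ in the sense of the restriction entering $\|\cdot\|_{\Bcal,K}$, one has $(\psi_{i},v-p)_{\Bcal'\times\Bcal}=(\psi_{i},\tilde w)_{\Bcal'\times\Bcal}$.

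It then remains to bound the latter pairing. Expanding $\psi_{i}=\sum_{j}u_{i,j}f_{j}$ and invoking \eqref{eq:matrcoeff1} gives $(\psi_{i},\tilde w)_{\Bcal'\times\Bcal}=(\bs u_{i},T_N^\star\tilde w)_{\ell^2}$, where $\bs u_{i}$ is the $i$-th row of $\bs U$. By Cauchy--Schwarz together with the Bessel inequality $\|T_N^\star\tilde w\|_{\ell^2}^2\le B_N\|\tilde w\|_{\Bcal}^2$ I obtain $|(\psi_{i},\tilde w)_{\Bcal'\times\Bcal}|\le\|\bs u_{i}\|_{\ell^2}\sqrt{B_N}\,\|\tilde w\|_{\Bcal}$. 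Since $\bs U$ is an isometry (Lemma~\ref{lem:basisTrafo}), in particular a contraction, the Gram matrix $\bs U\bs U^\star$ is a positive contraction and hence $\|\bs u_{i}\|_{\ell^2}^2=[\bs U\bs U^\star]_{i,i}\le 1$ (with equality when $\bs f_N$ is linearly independent, so that $\bs U$ is orthogonal); thus this factor may be dropped. Taking the infimum over all admissible extensions $\tilde w$ turns $\|\tilde w\|_{\Bcal}$ into $\|v-p\|_{\Bcal,K}$, and a final infimum over $p\in\Pcal$ yields the assertion.

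The hard part will be the localization step, i.e.\ making rigorous that $(\psi_{i},\cdot)_{\Bcal'\times\Bcal}$ depends only on the restriction of its argument to $K$. This is where the embedding $\Bcal'\subset\Ecal'(\Omega)$ and the compatibility of the distributional support with the semi-norm $\|\cdot\|_{\Bcal,K}$ are essential; one must in particular address the distinction between vanishing on the closed set $K$ and vanishing on a neighbourhood of $K$, which is the delicate point for distributions whose support equals $K$, such as derivatives of Dirac measures. The remaining ingredients---the vanishing moments, Cauchy--Schwarz, the Bessel bound, and the estimate $\|\bs u_{i}\|_{\ell^2}\le 1$---are routine.
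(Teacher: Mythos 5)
Your proof follows the same architecture as the paper's: remove \(p\) using the vanishing moments \eqref{eq:vanMom}, localize to \(K\isdef\support{\psi_{i}}\), pass to coefficients via \eqref{eq:matrcoeff1}, and finish with Cauchy--Schwarz, the Bessel bound \(B_N\), and the two infima; your extra observation that \(\|{\bs u}_{i}\|_{\ell^2}\le 1\) for an isometry \({\bs U}\) is correct and is indeed used silently in the paper. The genuine gap is the localization step, which you candidly label ``the hard part'' and then leave unproved. From the fact that \((\psi_{i},w)_{\Bcal'\times\Bcal}=0\) whenever \(w\) vanishes on a \emph{neighbourhood} of \(K\) you infer that \((\psi_{i},v-p)_{\Bcal'\times\Bcal}=(\psi_{i},\tilde w)_{\Bcal'\times\Bcal}\) for every \(\tilde w\) agreeing with \(v-p\) \emph{on} \(K\); this is a non sequitur, because \((v-p)-\tilde w\) then vanishes only on the closed set \(K\). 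The distinction is not pedantic: if agreement on \(K\) is read pointwise, the identity---and with it the estimate---can fail. For instance, in \(d=1\) a samplet proportional to \(\delta_{x_0}'\) has support \(\{x_0\}\) and vanishing moments with respect to constants, yet for \(v\) with \(v(x_0)=0\) and \(v'(x_0)\neq 0\) the left-hand side is nonzero while \(\inf\{\|\tilde v\|_{\Bcal}:\tilde v(x_0)=0\}=0\).

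The missing device is the one the paper uses: a smooth cutoff \(\chi\in C_0^\infty(\Omega)\) with \(\chi=1\) on a neighbourhood of \(K\). Then \((\psi_{i},w)_{\Bcal'\times\Bcal}=(\psi_{i},\chi w)_{\Bcal'\times\Bcal}\), cp.\ \cite[Theorem 6.24]{Rud91}, so that
\[
  |(\psi_{i},v)_{\Bcal'\times\Bcal}|
  =\big|\big(\psi_{i},\chi(v-p)\big)_{\Bcal'\times\Bcal}\big|
  =\big|\big({\bs u}_{i},T_N^\star\chi(v-p)\big)_{\ell^2}\big|
  \le\sqrt{B_N}\,\|\chi(v-p)\|_{\Bcal}.
\]
Reading the restriction in \(\|\cdot\|_{\Bcal,K}\) in the neighbourhood (germ) sense---which is what the paper's own final step tacitly does as well---this closes your gap: if \(\tilde w\) agrees with \(v-p\) on an open set \(U\supset K\), choose \(\chi\) supported in \(U\) and equal to \(1\) near \(K\); then \(\chi(v-p)=\chi\tilde w\), hence \((\psi_{i},v-p)_{\Bcal'\times\Bcal}=(\psi_{i},\tilde w)_{\Bcal'\times\Bcal}\), and your Cauchy--Schwarz/Bessel estimate applied to \(\tilde w\) gives the bound \(\sqrt{B_N}\|\tilde w\|_{\Bcal}\) before taking the infima over \(\tilde w\) and \(p\). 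Without this cutoff (or an equivalent mechanism) the argument stops exactly where you said it would.
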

\begin{proof}
  Since \(\Bcal'\subset\Ecal'(\Omega)\), the samplet \(\psi_{i}\) has compact 
  support \(\support{\psi_{i}}\). Let \(\chi\in C^\infty_0(\Omega)\) 
be any function with \(\chi=1\) in a neighborhood of \(\support{\psi_{i}}\). 
Then, there holds 
\((\psi_{i}, v)_{\Bcal'\times\Bcal}=(\psi_{i},\chi v)_{\Bcal'\times\Bcal}\),
cp.\ \cite[Theorem 6.24]{Rud91}.
Further, we have due to the vanishing moment property \eqref{eq:vanMom}, 
for any \(p\in\Pcal\) that
\begin{align*}
  |(\psi_{i}, v)_{\Bcal'\times\Bcal}|&=|(\psi_{i}, v-p)_{\Bcal'\times\Bcal}|
 =\big|\big(\psi_{i},\chi(v-p)\big)_{\Bcal'\times\Bcal}\big|\\
  &=\big|\big({\bs u}_{i},T_N^\star\chi(v-p)\big)_{\ell^2}\big|
 \leq\|T_N^\star\chi(v-p)\|_{\ell^2}\leq\sqrt{B_N}\|\chi(v-p)\|_{\Bcal},
\end{align*}
where we used the Cauchy-Schwarz inequality in the second last step
and the continuity of the finite analysis operator in the last step.
Since \(p\in\Pcal\) and \(\chi\in C^\infty_0(\Omega)\)
can be chosen arbitrarily, with the constraint that
\(\chi=1\) in a neighborhood of \(\support{\psi_{i}}\), we arrive at
\[
  |(\psi_{i}, v)_{\Bcal'\times\Bcal}|       
  \leq\sqrt{B_N}\inf_{p\in\Pcal}\|v-p\|_{\Bcal,\support{\psi_{i}}},
\]
as claimed.
\end{proof}

\begin{remark}
The previous theorem can be translated into certain properties in the
coefficient space \(\mathbb{R}^N\). There, the localization of the approximation 
with respect to the samplet's support corresponds to a localization 
of coefficients with respect to the support of the coefficient sequence
\({\bs u}_{i}\). More precisely, there holds
  \begin{align*}
    |(\psi_{i},v)_{\Bcal'\times\Bcal}|
    &=|({\bs u}_{i},T_N^\star v)_{\ell^2}|
  =\big|\big({\bs u}_{i},T_N^\star (v-p)\big)_{\ell^2}\big|\\
    &=\Big|\Big({\bs u}_{i},
  T_N^\star (v-p)\big|_{\support{{\bs u}_{i}}}\Big)_{\ell^2}\Big|
  \leq\|T_N^\star (v-p)\|_{\ell^2,\support{{\bs u}_{i}}}
  \end{align*}
by the Cauchy-Schwarz inequality.
Herein, the last term amounts to a localized version of the norm corresponding
to the \(\langle\cdot,\cdot\rangle_{\Bcal}\)-inner product. 
\end{remark}
Even though samplets can be constructed with respect to arbitrary primitives,
the canonical choice is \(\Pcal=\Pcal_q\) being the space of all
polynomials of degree less or equal than \(q\). In this case, it is well
known that \(m_{\Pcal}=\binom{d+q}{q}\). For polynomial primitives,
the Bramble-Hilbert lemma, cf.\ \cite{BH70}, directly yields a localization
result akin to those for classical wavelets in Sobolev spaces.

\begin{cor.}\label{cor:decay}
  Let $v\in W^{k,p}(\Omega)$, i.e., \(\Bcal= W^{k,p}(\Omega)\) and assume
  \(v\in W^{m,p}(O)\) with \(m\geq k\) for an open set 
  \(O\supset\support{\psi_{i}}\) and let \(q\geq m - 1\).
Then, there holds
\begin{equation}\label{eq:decaySobolev}
 \big|(\psi_{i},v)_{\Omega}\big|\le\sqrt{B_N} C(m,O) 
 \big(\diam\support{\psi_{i}}\big)^{m-k}\|v\|_{W^{m,p}(O)}
\end{equation}
for some constant \(C(m,O)\) depending on \(m\) and \(O\) and the
Bessel bound \(B_N\).
\end{cor.}

\subsection{Examples}\label{sec:exampfin0}
The present section is devoted to examples, which connect the presented
construction to existing ones in literature.

\begin{example}\label{ex:RKHS} Let \((\Hcal,\langle\cdot,\cdot\rangle_\Hcal)\)
be a reproducing kernel Hilbert space of functions \(h\colon\Omega\to\Rbb\)
on the domain \(\Omega\subset\Rbb^d\) with a positive definite
reproducing kernel $\kappa_{\bs x}({\bs y})=K({\bs x},{\bs y})$. Given
a set 
\[
X_N\isdef\{{\bs x}_1,\ldots,{\bs x}_N\}\subset\Omega\]
of mutually 
distinct points, we consider the subspace 
\[\Xcal_N'\isdef\finspan{\delta_{{\bs x}_1},\ldots, \delta_{{\bs x}_N}}
\subset\Hcal'.
\]
As \(\Hcal\) is a Hilbert space, we of course have that $\Hcal\simeq\Hcal'$
by the Riesz isometry. Even so, we want to distinguish between those spaces, 
and consider $\delta_{\bs x}$ as elements in $\Hcal'$, and the Riesz representers
$\kappa_{\bs x}$ as elements in $\Hcal$. 
The finite frame operator \(S_N\colon \Hcal \rightarrow \Hcal'\) associated
to the Dirac-$\delta$-distributions spanning \(\Xcal_N\) is given by
\[
S_N h=\sum_{i=1}^N h({\bs x}_i)\delta_{{\bs x}_i}.
\]
Further, we have by the Riesz isometry that 
\(S_N^{-1}\colon\Xcal_N'\to\Xcal_N=\finspan{\kappa_{{\bs x}_1},\ldots,
\kappa_{{\bs x}_N}}\)
and there holds
\[
S_N \left( \sum
\limits_{i=1}^N \alpha_i \kappa_{{\bs x}_i} \right) 
= \sum \limits_{i=1}^N [{\bs K}{\bs\alpha}]_i\delta_{{\bs x}_i}
\] 
with ${\bs K}\isdef[K({\bs x}_i,{\bs
x}_j)]_{i,j=1}^N$ being the kernel matrix. By assumption ${\bs K}$ 
is an invertible matrix. Therefore 
\[
S_N^{-1}\delta_{{\bs x}_j}=\sum_{i=1}^N[{\bs K}^{-1}]_{i,j}\kappa_{\bs x_i}
\defis\tilde{\kappa}_{{\bs x}_j},
\]
is the dual basis satisfying 
\((\delta_{{\bs x}_j},\delta_{{\bs x}_k})_{\Xcal_N'}
=(\delta_{{\bs x}_j},\tilde{\kappa}_{{\bs x}_k})_{\Hcal'\times\Hcal}
=\delta_{j,k}\). 
Furthermore,
if \({\bs U}\) is the transformation matrix of the 
samplet transform, then the basis
\[
  \widetilde{\bs\Psi}={\bs U}^\intercal
  [\tilde{\kappa}_{{\bs x}_1},\ldots,\tilde{\kappa}_{{\bs
  x}_n}]^\intercal
\]
is exactly the dual embedded samplet basis introduced in \cite{BHM24}.
\end{example}


\begin{example} Let 
  \(\Xcal_N=\finspan{\varphi_1,\ldots,\varphi_N}\subset H^1_0(\Omega)\),
for the domain \(\Omega\subset\Rbb^d\). We assume that the functions
\(\varphi_i\), are linearly independent such that \(\dim \Xcal_N=N\).
For example, in the context of finite element discretizations of partial 
differential equations, the basis elements \(\varphi_i\) are usually chosen as 
piecewise polynomial functions with respect to an underlying mesh and
exhibit local supports.
We introduce the
functionals \(\Xcal_N'=\finspan{f_1,\ldots,f_N}\subset H^{-1}(\Omega)\)  
by employing the Riesz isometry
\[
f_i(v)\isdef(\varphi_i,v)_{\Omega}=\int_\Omega \varphi_i v\d{\bs x},
\]
resulting in the isomorphy \(\Xcal_N\simeq\Xcal_N'\).
The finite frame operator 
$S_N\colon  H^1_0(\Omega) \rightarrow  H^{-1}(\Omega)$ is bijective 
from \(\Xcal_N\) to \(\Xcal_N'\) and given by
\[
  S_N \left(\sum \limits_{j=1}^N c_j \varphi_j \right) 
  = \sum_{i=1}^N\left(f_{i},\sum \limits_{j=1}^N c_j  \varphi_j
  \right)_{\Omega}f_{i} = \sum_{i=1}^N [{\bs M}{\bs c}]_i f_i  ,
\]
where ${\bs M}\isdef[(\varphi_i,\varphi_j)_{L^2}]_{i,j=1}^N$ is the Gramian
or, in the finite element context, the mass matrix, 
and ${\bs c} \isdef [c_i]_{i=1,\dots ,N}$ is the coefficient vector.
From the representation of \(S_N\), we directly infer the dual basis
\[
S_N^{-1}f_j=\tilde{f}_j=\sum_{i=1}^N[{\bs M}^{-1}]_{j,i}\varphi_{i},\quad
j=1,\ldots,N,
\]
satisfying 
\((f_j,f_k)_{\Xcal_N'}
=(f_j,\tilde{f_k})_{\Omega}=\delta_{j,k}\). 

Constructing samplets exhibiting
vanishing moments with respect to polynomials, results in wavelet-like functions,
akin to the Tausch-White construction, cf.\ \cite{TW03},
which can be constructed in a black-box fashion 
on arbitrary meshes.
\end{example}
\begin{example} \label{ex:3} As a closely connected setting, we finally consider 
the framework of operator adapted wavelets, see \cite{OS19}. Let
\(\Lcal\colon H^s_0(\Omega)\to H^{-s}(\Omega)\), \(s>0\), 
for some domain \(\Omega\subset\Rbb^d\), be a uniformly elliptic and continuous 
operator.
Consider the space \(\Xcal_N'=\finspan{f_1,\ldots,f_N}
\subset H^{-s}(\Omega)\). 
The operator \(\Lcal\) takes here the role of the Riesz isometry in the
previous two examples and we set
\(\Xcal_N=\finspan{\Lcal^{-1}f_1,\ldots,\Lcal^{-1}f_N}\). 
The finite frame operator 
$S_N\colon  H^s_0(\Omega) \rightarrow  H^{-s}(\Omega)$ is bijective 
from \(\Xcal_N\) to \(\Xcal_N'\) and given by
\[
  S_N \left(\sum \limits_{j=1}^N c_j\Lcal^{-1}f_j \right) 
  = \sum_{i=1}^N\left(f_{i},\sum \limits_{j=1}^N c_j\Lcal^{-1}f_j
  \right)_{\Omega}f_{i} = \sum_{i=1}^N [{\bs M}{\bs c}]_if_i,
\]
with the Gramian
\({\bs M}\isdef[(f_i,\Lcal^{-1}f_j)_{\Omega}]_{i,j=1}^N\),
which amounts to the stiffness matrix of the inverse operator \(\Lcal^{-1}\)
in the finite element context. As before, the canonical dual basis
is given by
\[
S_N^{-1}f_j=\tilde{f}_j=\sum_{i=1}^N[{\bs M}^{-1}]_{i,j}\Lcal^{-1}f_{i}
\]
and satisfies
\((f_j,f_k)_{\Xcal_N'}
=(f_j,\tilde{f}_k)_{\Omega}=\delta_{j,k}\). 

The elements \(\tilde{f}_i\in H^s_0(\Omega), \ i=1,\ldots,N\) are called
\emph{optimal recovery splines}, cp.\ \cite{OS19}. 
\end{example}

\section*{Acknowledgments}
We thank Nicki Holighaus for fruitful discussions and helpful remarks.

The work of Peter Balazs has been supported by the FWF projects 
LoFT (P~34624) and NoMASP (P~34922). 

The work of Michael Multerer has been supported by the SNSF 
starting grant 
``Multiresolution methods for unstructured data'' (TMSGI2\_211684).

\bibliographystyle{abbrv}
\bibliography{biblioall}
\end{document}